\newcommand{\bdry}[1]{\partial #1}
\newcommand{\dint}{\ds{\int}}
\newcommand{\dist}[2]{\text{dist}\, (#1,#2)}
\newcommand{\ds}[1]{\displaystyle #1}
\newcommand{\id}[1]{id_{#1}}
\newcommand{\incl}{\subset}
\newcommand{\norm}[2][]{\left\|#2\right\|_{#1}}
\newcommand{\PS}[1]{$(\text{PS})_{#1}$}
\newcommand{\pnorm}[2][]{\if #1'' \left|#2\right|_p \else \left|#2\right|_{#1} \fi}
\newcommand{\restr}[2]{\left.#1\right|_{#2}}
\newcommand{\seq}[1]{\left(#1\right)}
\newcommand{\set}[1]{\left\{#1\right\}}
\newcommand{\vol}[1]{\left|#1\right|}
\newcommand{\N}{\mathbb N}
\newcommand{\R}{\mathbb R}
\newcommand{\RP}{\R \text{P}}
\newcommand{\Z}{\mathbb Z}
\newcommand{\A}{{\cal A}}
\newcommand{\F}{{\cal F}}
\newcommand{\M}{{\cal M}}
\renewcommand{\le}{\leqslant}
\renewcommand{\ge}{\geqslant}
\DeclareMathOperator{\divg}{div}
\newenvironment{enumroman}{\begin{enumerate}

}{\end{enumerate}}
\newenvironment{properties}[1]{\begin{enumerate}

}{\end{enumerate}}
\newtheorem{corollary}{Corollary}[section]
\newtheorem{proposition}[corollary]{Proposition}
\newtheorem{theorem}[corollary]{Theorem}
\theoremstyle{remark}
\newtheorem{remark}[corollary]{Remark}
\numberwithin{equation}{section}
\title{\bf Bifurcation and multiplicity results for critical $p$-Laplacian problems\thanks{{\em MSC2010:} Primary 35J92, Secondary 35B33, 58E05
\newline \indent\; {\em Key Words and Phrases:} $p$-Laplacian, critical nonlinearity, bifurcation, multiplicity, existence, abstract critical point theory, $\Z_2$-cohomological index, pseudo-index}}
\author{\bf Kanishka Perera\\
Department of Mathematical Sciences\\
Florida Institute of Technology\\
Melbourne, FL 32901, USA\\
[\bigskipamount]
\bf Marco Squassina\thanks{The second-named author was supported by 2009 MIUR project: ``Variational and Topological Methods in the Study of Nonlinear Phenomena''.}\\
Dipartimento di Informatica\\
Universit\`a degli Studi di Verona\\
37134 Verona, Italy\\
[\bigskipamount]
\bf Yang Yang\thanks{This work was completed while the third-named author was visiting the Department of Mathematical Sciences at the Florida Institute of Technology, and she is grateful for the kind hospitality of the department. Project supported by NSFC-Tian Yuan Special Foundation (No. 11226116), Natural Science Foundation of Jiangsu Province of China for Young Scholars (No. BK2012109), and the China Scholarship Council (No. 201208320435).}\\
School of Science\\
Jiangnan University\\
Wuxi, 214122, China}
\date{}
\begin{document}

\maketitle

\begin{abstract}
We prove a bifurcation and multiplicity result that is independent of the dimension $N$ for a critical $p$-Laplacian problem that is the analog of the Brezis-Nirenberg problem for the quasilinear case. This extends a result in the literature for the semilinear case $p = 2$ to all $p \in (1,\infty)$. In particular, it gives a new existence result when $N < p^2$. When $p \ne 2$ the nonlinear operator $- \Delta_p$ has no linear eigenspaces, so our extension is nontrivial and requires a new abstract critical point theorem that is not based on linear subspaces. We prove a new abstract result based on a pseudo-index related to the $\Z_2$-cohomological index that is applicable here.
\end{abstract}

\newpage

\section{Introduction and main results}

Elliptic problems with critical nonlinearities have been widely studied in the literature. Let $\Omega$ be a bounded domain in $\R^N,\, N \ge 2$ with Lipschitz boundary. In the celebrated paper \cite{MR709644}, Br{\'e}zis and Nirenberg considered the problem
\begin{equation} \label{1.1}
\left\{\begin{aligned}
- \Delta u & = \lambda u + |u|^{2^\ast - 2}\, u && \text{in } \Omega\\[10pt]
u & = 0 && \text{on } \bdry{\Omega}
\end{aligned}\right.
\end{equation}
when $N \ge 3$, where $2^\ast = 2N/(N - 2)$ is the critical Sobolev exponent. Among other things, they proved that this problem has a positive solution when $N \ge 4$ and $0 < \lambda < \lambda_1$, where $\lambda_1 > 0$ is the first Dirichlet eigenvalue of $- \Delta$ in $\Omega$. Capozzi et al.\! \cite{MR831041} extended this result by proving the existence of a nontrivial solution for all $\lambda > 0$ when $N \ge 4$. The existence of infinitely many solutions for all $\lambda > 0$ was established by Fortunato and Jannelli \cite{MR890056} when $N \ge 4$ and $\Omega$ is a ball, and by Devillanova and Solimini \cite{MR1919704} when $N \ge 7$ and $\Omega$ is an arbitrary bounded domain (see also Schechter and Zou \cite{MR2646823}).

Garc{\'{\i}}a Azorero and Peral Alonso \cite{MR912211}, Egnell \cite{MR956567}, and Guedda and V{\'e}ron \cite{MR1009077} studied the corresponding problem for the $p$-Laplacian
\begin{equation} \label{1.2}
\left\{\begin{aligned}
- \Delta_p\, u & = \lambda\, |u|^{p-2}\, u + |u|^{p^\ast - 2}\, u && \text{in } \Omega\\[10pt]
u & = 0 && \text{on } \bdry{\Omega}
\end{aligned}\right.
\end{equation}
when $1 < p < N$, where $\Delta_p\, u = \divg \left(|\nabla u|^{p-2}\, \nabla u\right)$ is the $p$-Laplacian of $u$ and $p^\ast = Np/(N - p)$. They proved that this problem has a positive solution when $N \ge p^2$ and $0 < \lambda < \lambda_1$, where $\lambda_1 > 0$ is the first Dirichlet eigenvalue of $- \Delta_p$ in $\Omega$. Degiovanni and Lancelotti \cite{MR2514055} extended their result by proving the existence of a nontrivial solution when $N \ge p^2$ and $\lambda > \lambda_1$ is not an eigenvalue, and when $N^2/(N + 1) > p^2$ and $\lambda \ge \lambda_1$ (see also Arioli and Gazzola \cite{MR1741848}). The existence of infinitely many solutions for all $\lambda > 0$ was recently established by Cao et al.\! \cite{MR2885967} when $N > p^2 + p$ (see also Wu and Huang \cite{MR3072257}).

On the other hand, Cerami et al.\! \cite{MR779872} proved the following bifurcation and multiplicity result for problem \eqref{1.1} that is independent of $N$ and $\Omega$. Let $0 < \lambda_1 < \lambda_2 \le \lambda_3 \le \cdots \to + \infty$ be the Dirichlet eigenvalues of $- \Delta$ in $\Omega$, repeated according to multiplicity, let
\[
S = \inf_{u \in H^1_0(\Omega) \setminus \set{0}}\, \frac{\norm[2]{\nabla u}^2}{\norm[2^\ast]{u}^2}
\]
be the best constant for the Sobolev imbedding $H^1_0(\Omega) \hookrightarrow L^{2^\ast}(\Omega)$ when $N \ge 3$, and let $\vol{\cdot}$ denote the Lebesgue measure in $\R^N$. If $\lambda_k \le \lambda < \lambda_{k+1}$ and
\[
\lambda > \lambda_{k+1} - \frac{S}{\vol{\Omega}^{2/N}},
\]
and $m$ denotes the multiplicity of $\lambda_{k+1}$, then problem \eqref{1.1} has $m$ distinct pairs of nontrivial solutions $\pm\, u^\lambda_j,\, j = 1,\dots,m$ such that $u^\lambda_j \to 0$ as $\lambda \nearrow \lambda_{k+1}$ (see \cite[Theorem 1.1]{MR779872}).

In the present paper we extend the above bifurcation and multiplicity result to the $p$\nobreakdash-Laplacian problem \eqref{1.2}. This extension to the quasilinear case is quite nontrivial. Indeed, the linking argument based on eigenspaces of $- \Delta$ in \cite{MR779872} does not work when $p \ne 2$ since the nonlinear operator $- \Delta_p$ does not have linear eigenspaces. We will use a more general construction based on sublevel sets as in Perera and Szulkin \cite{MR2153141} (see also Perera et al.\! \cite[Proposition 3.23]{MR2640827}). Moreover, the standard sequence of eigenvalues of $- \Delta_p$ based on the genus does not give enough information about the structure of the sublevel sets to carry out this linking construction. Therefore we will use a different sequence of eigenvalues introduced in Perera \cite{MR1998432} that is based on a cohomological index.

The $\Z_2$-cohomological index of Fadell and Rabinowitz \cite{MR57:17677} is defined as follows. Let $W$ be a Banach space and let $\A$ denote the class of symmetric subsets of $W \setminus \set{0}$. For $A \in \A$, let $\overline{A} = A/\Z_2$ be the quotient space of $A$ with each $u$ and $-u$ identified, let $f : \overline{A} \to \RP^\infty$ be the classifying map of $\overline{A}$, and let $f^\ast : H^\ast(\RP^\infty) \to H^\ast(\overline{A})$ be the induced homomorphism of the Alexander-Spanier cohomology rings. The cohomological index of $A$ is defined by
\[
i(A) = \begin{cases}
\sup \set{m \ge 1 : f^\ast(\omega^{m-1}) \ne 0}, & A \ne \emptyset\\[5pt]
0, & A = \emptyset,
\end{cases}
\]
where $\omega \in H^1(\RP^\infty)$ is the generator of the polynomial ring $H^\ast(\RP^\infty) = \Z_2[\omega]$. For example, the classifying map of the unit sphere $S^{m-1}$ in $\R^m,\, m \ge 1$ is the inclusion $\RP^{m-1} \incl \RP^\infty$, which induces isomorphisms on $H^q$ for $q \le m - 1$, so $i(S^{m-1}) = m$.

The Dirichlet spectrum of $- \Delta_p$ in $\Omega$ consists of those $\lambda \in \R$ for which the problem
\begin{equation} \label{1.6}
\left\{\begin{aligned}
- \Delta_p\, u & = \lambda\, |u|^{p-2}\, u && \text{in } \Omega\\[10pt]
u & = 0 && \text{on } \bdry{\Omega}
\end{aligned}\right.
\end{equation}
has a nontrivial solution. Although a complete description of the spectrum is not yet known when $p \ne 2$, we can define an increasing and unbounded sequence of eigenvalues via a suitable minimax scheme. The standard scheme based on the genus does not give the index information necessary for our purposes here, so we will use the following scheme based on the cohomological index as in Perera \cite{MR1998432}. Let
\begin{equation} \label{1.9}
\Psi(u) = \frac{1}{\dint_\Omega |u|^p\, dx}, \quad u \in \M = \set{u \in W^{1,p}_0(\Omega) : \int_\Omega |\nabla u|^p\, dx = 1}.
\end{equation}
Then eigenvalues of problem \eqref{1.6} on $\M$ coincide with critical values of $\Psi$. We use the standard notation
\[
\Psi^a = \set{u \in \M : \Psi(u) \le a}, \quad \Psi_a = \set{u \in \M : \Psi(u) \ge a}, \quad a \in \R
\]
for the sublevel sets and superlevel sets, respectively. Let $\F$ denote the class of symmetric subsets of $\M$ and set
\[
\lambda_k := \inf_{M \in \F,\; i(M) \ge k}\, \sup_{u \in M}\, \Psi(u), \quad k \in \N.
\]
Then $0 < \lambda_1 < \lambda_2 \le \lambda_3 \le \cdots \to + \infty$ is a sequence of eigenvalues of problem \eqref{1.6} and
\begin{equation} \label{1.7}
\lambda_k < \lambda_{k+1} \implies i(\Psi^{\lambda_k}) = i(\M \setminus \Psi_{\lambda_{k+1}}) = k
\end{equation}
(see Perera et al.\! \cite[Propositions 3.52 and 3.53]{MR2640827}). Making essential use of \eqref{1.7}, we will prove the following theorem.

\begin{theorem} \label{Theorem 1.1}
Let
\begin{equation} \label{1.11}
S = \inf_{u \in W^{1,p}_0(\Omega) \setminus \set{0}}\, \frac{\norm[p]{\nabla u}^p}{\norm[p^\ast]{u}^p}.
\end{equation}
\begin{enumroman}
\item \label{Theorem 1.1.i} If
\[
\lambda_1 - \frac{S}{\vol{\Omega}^{p/N}} < \lambda < \lambda_1,
\]
then problem \eqref{1.2} has a pair of nontrivial solutions $\pm\, u^\lambda$ such that $u^\lambda \to 0$ as $\lambda \nearrow \lambda_1$.

\item \label{Theorem 1.1.ii} If $\lambda_k \le \lambda < \lambda_{k+1} = \cdots = \lambda_{k+m} < \lambda_{k+m+1}$ for some $k, m \in \N$ and
\begin{equation} \label{1.10}
\lambda > \lambda_{k+1} - \frac{S}{\vol{\Omega}^{p/N}},
\end{equation}
then problem \eqref{1.2} has $m$ distinct pairs of nontrivial solutions $\pm\, u^\lambda_j,\, j = 1,\dots,m$ such that $u^\lambda_j \to 0$ as $\lambda \nearrow \lambda_{k+1}$.
\end{enumroman}
\end{theorem}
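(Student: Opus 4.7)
The plan is to realize the solutions of \eqref{1.2} as critical points of the even $C^1$-functional
$$
I_\lambda(u)=\frac{1}{p}\int_\Omega|\nabla u|^p\,dx-\frac{\lambda}{p}\int_\Omega|u|^p\,dx-\frac{1}{p^\ast}\int_\Omega|u|^{p^\ast}\,dx
$$
on $W^{1,p}_0(\Omega)$. A standard concentration--compactness argument shows that $I_\lambda$ satisfies the Palais--Smale condition at every level $c<c^\ast:=S^{N/p}/N$, while compactness can fail above $c^\ast$ through concentrating Talenti instantons. The task therefore reduces to constructing $m$ (respectively one) minimax critical values lying strictly in $(0,c^\ast)$.

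For part~\ref{Theorem 1.1.ii}, the classical linking over a $(k+m)$-dimensional eigenspace used in \cite{MR779872} is unavailable for $p\ne 2$, so I would replace it by a pseudo-index construction built on sublevel sets of $\Psi$. By \eqref{1.7}, $i(\Psi^{\lambda_k})=i(\M\setminus\Psi_{\lambda_{k+1}})=k$, and applied with $k+m$ in place of $k$, $i(\Psi^{\lambda_{k+m}})=k+m$. A new abstract pseudo-index theorem based on the $\Z_2$-cohomological index $i$ and on these sublevel sets, to be proved separately, should then produce $m$ critical values
$$
c_{k+1}\le\cdots\le c_{k+m}
$$
of $I_\lambda$, provided one exhibits a symmetric set $A$ obtained by capping a radial cone over $\Psi^{\lambda_{k+m}}$ with one concentrating Talenti direction, such that $\sup_A I_\lambda<c^\ast$, and ensures that $I_\lambda$ is bounded away from $0$ on a complementary linking set of index at least $k+1$.

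The main obstacle is the uniform estimate $\sup_A I_\lambda<c^\ast$. For a direction $v\in\Psi^{\lambda_{k+m}}\subset\M$ (so $\|\nabla v\|_p=1$ and $\Psi(v)=1/\|v\|_p^p\le\lambda_{k+1}$), optimising $I_\lambda(tv)$ over $t\ge 0$ gives the ray maximum
$$
\max_{t\ge 0}I_\lambda(tv)=\frac{1}{N}\biggl(\frac{1-\lambda\|v\|_p^p}{\|v\|_{p^\ast}^{p}}\biggr)^{N/p}
$$
(interpreted as $0$ when the numerator is nonpositive). H\"older's inequality $\|v\|_{p^\ast}^p\ge\|v\|_p^p/|\Omega|^{p/N}$ combined with $\Psi(v)\le\lambda_{k+1}$ would then yield
$$
\frac{1-\lambda\|v\|_p^p}{\|v\|_{p^\ast}^{p}}\le|\Omega|^{p/N}\bigl(\lambda_{k+1}-\lambda\bigr),
$$
which is strictly smaller than $S$ precisely under hypothesis~\eqref{1.10}. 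The Talenti direction (a concentrator $u_\eps$ supported in a small ball in $\Omega$, chosen disjoint from the cap) is handled by the sharp expansion $\|\nabla u_\eps\|_p^p/\|u_\eps\|_{p^\ast}^{p}=S+o(1)$, and disjointness of supports kills the cross-interaction terms, making the bubble branch also strictly subcritical for $\eps$ sufficiently small. This is the only place where \eqref{1.10} is used, and it constitutes the analytical core of the argument.

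Part~\ref{Theorem 1.1.i} falls out of the same scheme with $k=0$ and $m=1$: $\Psi^{\lambda_1}=\M$ and the construction collapses to a standard mountain pass, since $\lambda<\lambda_1$ makes $u=0$ a strict local minimum of $I_\lambda$ on $W^{1,p}_0(\Omega)$ by the variational characterization of $\lambda_1$. For the bifurcation conclusion, as $\lambda\nearrow\lambda_{k+1}$ the right-hand side of the ray-maximum estimate tends to $0$, hence each $c_j\to 0$; at every nontrivial critical point the Nehari identity $\langle I'_\lambda(u),u\rangle=0$ gives $I_\lambda(u)=(1/N)\|u\|_{p^\ast}^{p^\ast}$, so $\|u^\lambda_j\|_{p^\ast}\to 0$, and the same identity combined with $\|u\|_p\le|\Omega|^{1/N}\|u\|_{p^\ast}$ then forces $\|\nabla u^\lambda_j\|_p\to 0$.
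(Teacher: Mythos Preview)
Your overall architecture---even functional, \PS{c} below $S^{N/p}/N$, pseudo-index minimax built on the sublevel sets of $\Psi$ via the cohomological index, ray-maximum estimate controlled by H\"older and hypothesis~\eqref{1.10}, and the Nehari identity for the bifurcation statement---is exactly the paper's. Your ray-maximum computation and the bound
\[
\frac{1-\lambda\|v\|_p^p}{\|v\|_{p^\ast}^p}\le |\Omega|^{p/N}(\lambda_{k+1}-\lambda)<S
\]
are correct and are precisely what the paper uses to get $\sup\Phi(X)\le \frac{|\Omega|}{N}(\lambda_{k+1}-\lambda)^{N/p}<S^{N/p}/N$.

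The one substantive problem is the Talenti bubble you graft onto the construction. It is unnecessary: since $\lambda_{k+1}=\cdots=\lambda_{k+m}$, already $i(\Psi^{\lambda_{k+m}})=k+m$, so the radial cone over (a compact symmetric subset of) $\Psi^{\lambda_{k+m}}$ alone has pseudo-index $\ge k+m$; no extra direction is required. Worse, your claim that ``the bubble branch is also strictly subcritical for $\eps$ small'' is false in general. With supports disjoint, the bubble branch contributes
\[
\max_{t\ge 0} I_\lambda(tu_\eps)=\frac{1}{N}\Bigl(\frac{\|\nabla u_\eps\|_p^p-\lambda\|u_\eps\|_p^p}{\|u_\eps\|_{p^\ast}^p}\Bigr)^{N/p},
\]
and for $N<p^2$ the correction $\lambda\|u_\eps\|_p^p$ is of the \emph{same} order $\eps^{(N-p)/(p-1)}$ as the cutoff error in $\|\nabla u_\eps\|_p^p$, so one cannot conclude the quotient drops below $S$. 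This is exactly why the earlier existence results required $N\ge p^2$; the whole point of Theorem~\ref{Theorem 1.1} (and Corollary~\ref{Corollary 1.2}) is that it is dimension-free, and this is achieved precisely by \emph{not} using a concentrating profile. Drop the bubble entirely; the paper's set $X$ is just $\{tRu:u\in A_0,\ 0\le t\le 1\}$ with $A_0\subset\Psi^{\lambda_{k+m}}$ compact and $i(A_0)=k+m$ (such $A_0$ exists by Degiovanni--Lancelotti).

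Two minor points. First, $\Psi^{\lambda_1}\ne\M$ (it is the set of normalized first eigenfunctions), but your mountain-pass description of part~\ref{Theorem 1.1.i} is nonetheless correct: with $k=0$ one takes $B_0=\M$ and $A_0=\{\pm\varphi_1\}$. Second, the paper works with a \emph{compact} symmetric $A_0\subset\Psi^{\lambda_{k+m}}$ rather than the full sublevel set, which is needed for the abstract theorem; this should be made explicit.
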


\noindent
In particular, we have the following existence result, which is new when $N < p^2$.

\begin{corollary} \label{Corollary 1.2}
Problem \eqref{1.2} has a nontrivial solution for all $\lambda \in \ds{\bigcup_{k=1}^\infty} \big(\lambda_k - S/\vol{\Omega}^{p/N},\lambda_k\big)$.
\end{corollary}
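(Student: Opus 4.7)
The plan is to deduce the corollary directly from Theorem \ref{Theorem 1.1} by choosing the correct spectral index for each admissible $\lambda$. Fixing $\lambda \in \bigcup_{k=1}^\infty (\lambda_k - S/\vol{\Omega}^{p/N},\lambda_k)$, I would let $k_0$ be the smallest positive integer satisfying $\lambda < \lambda_{k_0}$; such an integer exists since the hypothesis on $\lambda$ forces $\lambda < \lambda_j$ for at least one $j$.

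The first step is to verify the gap inequality $\lambda > \lambda_{k_0} - S/\vol{\Omega}^{p/N}$. Since $\lambda$ lies in $(\lambda_j - S/\vol{\Omega}^{p/N},\lambda_j)$ for some $j$, the minimality of $k_0$ forces $j \geq k_0$, whence $\lambda_j \geq \lambda_{k_0}$, and therefore $\lambda > \lambda_j - S/\vol{\Omega}^{p/N} \geq \lambda_{k_0} - S/\vol{\Omega}^{p/N}$.

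Next I would split into two cases. If $k_0 = 1$, then $\lambda_1 - S/\vol{\Omega}^{p/N} < \lambda < \lambda_1$ and Theorem \ref{Theorem 1.1}\,\ref{Theorem 1.1.i} immediately supplies a pair $\pm u^\lambda$ of nontrivial solutions. If $k_0 \geq 2$, I would set $k := k_0 - 1$ and take $m$ to be the multiplicity of $\lambda_{k_0}$, so that $\lambda_{k+1} = \cdots = \lambda_{k+m} = \lambda_{k_0} < \lambda_{k+m+1}$. The minimality of $k_0$ yields $\lambda_k = \lambda_{k_0 - 1} \leq \lambda < \lambda_{k+1}$, and the gap inequality established above is precisely \eqref{1.10}. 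Theorem \ref{Theorem 1.1}\,\ref{Theorem 1.1.ii} then produces $m \geq 1$ distinct pairs of nontrivial solutions, any one of which settles the claim.

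I do not anticipate any genuine obstacle. The only subtlety is the bookkeeping when several of the intervals $(\lambda_k - S/\vol{\Omega}^{p/N},\lambda_k)$ overlap, which is handled by taking $k_0$ minimal; all the substantive content resides in Theorem \ref{Theorem 1.1}.
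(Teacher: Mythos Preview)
Your argument is correct and is precisely the intended deduction: the paper states Corollary~\ref{Corollary 1.2} as an immediate consequence of Theorem~\ref{Theorem 1.1} without writing out a proof, and the case split you perform (choosing the minimal $k_0$ with $\lambda<\lambda_{k_0}$ and then invoking part~\ref{Theorem 1.1.i} or~\ref{Theorem 1.1.ii} accordingly) is exactly what is implicit in the word ``in particular'' preceding the corollary. No further idea is needed.
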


\noindent
We note that $\lambda_1 > S/\vol{\Omega}^{p/N}$. Indeed, let $\varphi_1$ be an eigenfunction associated with $\lambda_1$. It is well-known that $S$ is not attained, so
\[
\lambda_1 = \frac{\norm[p]{\nabla \varphi_1}^p}{\norm[p]{\varphi_1}^p} > \frac{S \norm[p^\ast]{\varphi_1}^p}{\norm[p]{\varphi_1}^p} \ge \frac{S}{\vol{\Omega}^{p/N}}
\]
by the H\"older inequality.

\noindent
The abstract result of Bartolo et al. \cite{MR713209} used in \cite{MR779872} is based on linear subspaces and therefore cannot be used to prove our Theorem \ref{Theorem 1.1}. In the next section we will prove a more general critical point theorem based on a pseudo-index related to the cohomological index that is applicable here (see also Perera et al.\! \cite[Proposition 3.44]{MR2640827}).

\section{An abstract critical point theorem}

In this section we prove an abstract critical point theorem based on the cohomological index that we will use to prove Theorem \ref{Theorem 1.1}.
Let $W$ be a Banach space and let $\A$ denote the class of symmetric subsets of $W \setminus \set{0}$.
The following proposition summarizes the basic properties of the cohomological index. %Theorems \ref{Theorem 1.1} and \ref{Theorem 1.3}

\begin{proposition}[Fadell-Rabinowitz \cite{MR57:17677}] \label{Proposition 2.1}
The index $i : \A \to \N \cup \set{0,\infty}$ has the following properties:
\begin{properties}{i}
\item Definiteness: $i(A) = 0$ if and only if $A = \emptyset$;
\item \label{i2} Monotonicity: If there is an odd continuous map from $A$ to $B$ (in particular, if $A \subset B$), then $i(A) \le i(B)$. Thus, equality holds when the map is an odd homeomorphism;
\item Dimension: $i(A) \le \dim W$;
\item Continuity: If $A$ is closed, then there is a closed neighborhood $N \in \A$ of $A$ such that $i(N) = i(A)$. When $A$ is compact, $N$ may be chosen to be a $\delta$-neighborhood $N_\delta(A) = \set{u \in W : \dist{u}{A} \le \delta}$;
\item Subadditivity: If $A$ and $B$ are closed, then $i(A \cup B) \le i(A) + i(B)$;
\item \label{i6} Stability: If $SA$ is the suspension of $A \ne \emptyset$, obtained as the quotient space of $A \times [-1,1]$ with $A \times \set{1}$ and $A \times \set{-1}$ collapsed to different points, then $i(SA) = i(A) + 1$;
\item \label{i7} Piercing property: If $A$, $A_0$ and $A_1$ are closed, and $\varphi : A \times [0,1] \to A_0 \cup A_1$ is a continuous map such that $\varphi(-u,t) = - \varphi(u,t)$ for all $(u,t) \in A \times [0,1]$, $\varphi(A \times [0,1])$ is closed, $\varphi(A \times \set{0}) \subset A_0$ and $\varphi(A \times \set{1}) \subset A_1$, then $i(\varphi(A \times [0,1]) \cap A_0 \cap A_1) \ge i(A)$;
\item Neighborhood of zero: If $U$ is a bounded closed symmetric neighborhood of $0$, then $i(\bdry{U}) = \dim W$.
\end{properties}
\end{proposition}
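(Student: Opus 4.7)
The plan is to verify each of the eight properties directly from the definition, using functoriality of the classifying map $\overline{A} \to \RP^\infty$, continuity of Alexander-Spanier cohomology on closed pairs, and the polynomial ring structure $H^\ast(\RP^\infty) = \Z_2[\omega]$. The argument is that of Fadell-Rabinowitz \cite{MR57:17677}; I would not re-derive it in full but record the shape of each verification.

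Properties $(i_1)$--$(i_4)$ are essentially formal. Definiteness is $f^\ast(1) = 1 \ne 0$ whenever $\overline{A} \ne \emptyset$. For monotonicity, an odd continuous $g : A \to B$ descends to $\bar{g} : \overline{A} \to \overline{B}$; by universality $f_A \simeq f_B \circ \bar{g}$, so $f_A^\ast = \bar{g}^\ast \circ f_B^\ast$, and any $\omega^{m-1}$ surviving on $\overline{A}$ survives on $\overline{B}$. The dimension bound is vacuous when $\dim W = \infty$, and when $\dim W = n$ the classifying map factors through $\RP^{n-1}$, forcing $i(A) \le n$. Continuity uses that the Alexander-Spanier cohomology of a closed set is the direct limit over its closed neighborhoods, so non-vanishing of $f^\ast(\omega^{i(A)-1})$ persists to some sufficiently small closed symmetric neighborhood, which then has the same index by monotonicity.

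Subadditivity $(i_5)$ uses the cup product: if $a = i(A)$ and $b = i(B)$, then $\omega^a$ and $\omega^b$ lift to relative classes in $H^a(\overline{A \cup B}, \overline{A})$ and $H^b(\overline{A \cup B}, \overline{B})$, and their cup product lies in $H^{a+b}(\overline{A \cup B}, \overline{A \cup B}) = 0$, so $\omega^{a+b}$ vanishes on $\overline{A \cup B}$. For stability $(i_6)$, the equatorial inclusion $A \hookrightarrow SA$ descends to $\overline{A} \hookrightarrow \overline{SA}$ with contractible open complement, and a long-exact-sequence computation raises the top surviving power of $\omega$ by exactly one. Property $(i_8)$ follows from $(i_6)$ by induction, or directly: radial projection identifies $\bdry{U}$ with a sphere, and $i(S^{n-1}) = n$ because $S^{n-1}/\Z_2 = \RP^{n-1}$ realises $\omega^{n-1}$ but not $\omega^n$.

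The main obstacle is the piercing property $(i_7)$, the technically deepest item. Setting $B = \varphi(A \times [0,1])$, $B_j = B \cap A_j$, and $C = B_0 \cap B_1$ gives a closed cover $B = B_0 \cup B_1$, since $B \subset A_0 \cup A_1$. The endpoint slices $A \to B_j$ given by $\varphi(\cdot, 0)$ and $\varphi(\cdot, 1)$ are odd continuous, and through them the classifying map of $A$ factors, so $i(B_j) \ge i(A)$ for both $j$. A diagram chase in the Mayer-Vietoris sequence for the closed cover $(B; B_0, B_1)$, exactly as in Fadell-Rabinowitz, then yields the desired lower bound $i(C) \ge i(A)$. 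Making this chase rigorous, with only the hypothesis that $\varphi(A \times [0,1])$ is closed rather than any finer regularity, is what ties the argument to Alexander-Spanier cohomology and is the genuinely delicate point.
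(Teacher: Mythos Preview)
The paper does not actually prove Proposition~2.1; it states the eight properties with attribution to Fadell and Rabinowitz \cite{MR57:17677} and then uses them as a black box. Your outline is precisely a sketch of the argument in that reference, so you are supplying what the paper only cites; in that sense there is nothing to compare.

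One caution on your sketch of $(i_7)$: the inequalities $i(B_j) \ge i(A)$ coming from the endpoint maps are true but are \emph{not} by themselves enough to feed into Mayer--Vietoris and conclude $i(B_0 \cap B_1) \ge i(A)$ --- two disjoint copies of $S^{m-1}$ already show that large index on each piece says nothing about the intersection. The crucial ingredient, which you have available but do not make explicit, is that the two endpoint maps $\varphi(\cdot,0)$ and $\varphi(\cdot,1)$ are odd-homotopic as maps $A \to B$ via $\varphi$ itself; it is this homotopy that drives the Mayer--Vietoris (or relative-cohomology) diagram chase in Fadell--Rabinowitz. Since you explicitly defer that step to the reference, the sketch is acceptable, but as written the sentence preceding the chase slightly misidentifies the input.
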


\noindent
Let $\Phi$ be an even $C^1$-functional defined on $W$, and recall that $\Phi$ satisfies the Palais-Smale compactness condition at the level $c \in \R$, or \PS{c} for short, if every sequence $\seq{u_j} \subset W$ such that $\Phi(u_j) \to c$ and $\Phi'(u_j) \to 0$ has a convergent subsequence. Let $\A^\ast$ denote the class of symmetric subsets of $W$, let $r > 0$, let $S_r = \set{u \in W : \norm{u} = r}$, let $0 < b \le + \infty$, and let $\Gamma$ denote the group of odd homeomorphisms of $W$ that are the identity outside $\Phi^{-1}(0,b)$. The pseudo-index of $M \in \A^\ast$ related to $i$, $S_r$ and $\Gamma$ is defined by
\[
i^\ast(M) = \min_{\gamma \in \Gamma}\, i(\gamma(M) \cap S_r)
\]
(see Benci \cite{MR84c:58014}). The following critical point theorem generalizes \cite[Theorem 2.4]{MR713209}.

\begin{theorem} \label{Theorem 2.1}
Let $A_0,\, B_0$ be symmetric subsets of $S_1$ such that $A_0$ is compact, $B_0$ is closed, and
\[
i(A_0) \ge k + m, \qquad i(S_1 \setminus B_0) \le k
\]
for some integers $k \ge 0$ and $m \ge 1$. Assume that there exists $R > r$ such that
\[
\sup \Phi(A) \le 0 < \inf \Phi(B), \qquad \sup \Phi(X) < b,
\]
where $A = \set{Ru : u \in A_0}$, $B = \set{ru : u \in B_0}$, and $X = \set{tu : u \in A,\, 0 \le t \le 1}$. For $j = k + 1,\dots,k + m$, let
\[
\A_j^\ast = \set{M \in \A^\ast : M \text{ is compact and } i^\ast(M) \ge j}
\]
and set
\[
c_j^\ast := \inf_{M \in \A_j^\ast}\, \max_{u \in M}\, \Phi(u).
\]
Then
\[
\inf \Phi(B) \le c_{k+1}^\ast \le \dotsb \le c_{k+m}^\ast \le \sup \Phi(X),
\]
in particular, $0 < c_j^\ast < b$. If, in addition, $\Phi$ satisfies the {\em \PS{c}} condition for all $c \in (0,b)$, then each $c_j^\ast$ is a critical value of $\Phi$ and there are $m$ distinct pairs of associated critical points.
\end{theorem}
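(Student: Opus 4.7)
The plan is to verify the announced chain of inequalities for the pseudo-index min-max values $c_j^*$ by purely topological means using the properties of the cohomological index, and then to upgrade them to critical values via a standard equivariant deformation argument under \PS{c}. For the upper bound, I would show $X \in \A_{k+m}^*$. Fix $\gamma \in \Gamma$; since $\Phi \le 0$ on $A = RA_0$, $\gamma$ is the identity on $A$, so the odd continuous map $\varphi : A_0 \times [0,1] \to W$ defined by $\varphi(u,t) = \gamma(tRu)$ has compact image $\gamma(X)$, sends $A_0 \times \{0\}$ into $\overline{B_r}$ (as $\varphi(u,0) = 0$), and sends $A_0 \times \{1\}$ into $W \setminus B_r$ (as $\varphi(u,1) = Ru$ with $R > r$). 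The piercing property $(i_7)$, applied with $A_0' = \overline{B_r}$ and $A_1' = W \setminus B_r$ (whose intersection is $S_r$), gives $i(\gamma(X) \cap S_r) \ge i(A_0) \ge k + m$, and taking the infimum over $\gamma$ yields $i^*(X) \ge k + m$, hence $c_{k+m}^* \le \sup \Phi(X) < b$. For the lower bound, I take any $M \in \A_{k+1}^*$ and apply the definition with $\gamma = \mathrm{id}$ to obtain $i(M \cap S_r) \ge k + 1$. The radial scaling $u \mapsto u/r$ is an odd homeomorphism $S_r \to S_1$ carrying $B$ onto $B_0$, so by monotonicity $(i_2)$, $i(S_r \setminus B) = i(S_1 \setminus B_0) \le k$; were $M \cap B = \emptyset$, monotonicity would force $i(M \cap S_r) \le k$, a contradiction. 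Therefore $M \cap B \ne \emptyset$ and $\max \Phi(M) \ge \inf \Phi(B) > 0$, giving $c_{k+1}^* \ge \inf \Phi(B)$; monotonicity in $j$ is immediate from $\A_{j+1}^* \subset \A_j^*$.

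To show each $c_j^*$ is a critical value, I would argue by contradiction. If $c = c_j^* \in (0, b)$ were regular, choose $0 < \epsilon < \min(c, b - c)$ and invoke the standard equivariant deformation lemma, which under \PS{c} produces an odd homeomorphism $\eta$ that is the identity outside $\Phi^{-1}(c - \epsilon, c + \epsilon) \subset \Phi^{-1}(0, b)$ and satisfies $\eta(\Phi^{c+\epsilon}) \subset \Phi^{c-\epsilon}$; in particular $\eta \in \Gamma$. Picking $M \in \A_j^*$ with $\max \Phi(M) \le c + \epsilon$ and using that $\Gamma$ is a group, so $\gamma \circ \eta \in \Gamma$ for every $\gamma \in \Gamma$, one sees $i(\gamma(\eta(M)) \cap S_r) \ge i^*(M) \ge j$, whence $\eta(M) \in \A_j^*$; yet $\max \Phi(\eta(M)) \le c - \epsilon$, contradicting the definition of $c_j^*$. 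For multiplicity, if $c_j^* = \cdots = c_{j+l}^* = c$ with $l \ge 1$, I would excise a closed symmetric neighborhood $N$ of the compact critical set $K_c$ with $i(N) = i(K_c)$ (continuity $(i_4)$) and use the subadditivity bound $i^*(M) \le i^*(M \setminus \mathrm{int}\,N) + i(N)$ --- obtained by applying Proposition 2.1 (v) and (ii) to $\gamma(M) \cap S_r = (\gamma(M \setminus \mathrm{int}\,N) \cap S_r) \cup (\gamma(M \cap N) \cap S_r)$ and then taking the infimum over $\gamma \in \Gamma$ --- together with a deformation excluding a neighborhood of $K_c$, to force $i(K_c) \ge l + 1 \ge 2$. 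Since any finite symmetric subset of $W \setminus \{0\}$ has index $1$, $K_c$ must be infinite, and altogether the levels $c_{k+1}^*, \ldots, c_{k+m}^*$ account for at least $m$ distinct pairs of critical points.

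The main obstacle is the upper bound in the first step: one must set up exactly the right equivariant homotopy of $X$ through $S_r$ so that the piercing property $(i_7)$ recognizes the "thick" linking set $X$ as carrying pseudo-index at least $k + m$. Once that topological linking is secured, the remaining steps run along familiar lines, provided the perturbation $\eta$ is kept supported in $\Phi^{-1}(0, b)$ so that it belongs to $\Gamma$ and can be composed with arbitrary $\gamma \in \Gamma$ without leaving the admissible class.
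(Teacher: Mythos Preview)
Your proposal is correct and follows essentially the same route as the paper: the lower bound via $\gamma=\id{W}$ and monotonicity, and the upper bound via the piercing property $(i_7)$ applied to $\varphi(u,t)=\gamma(tRu)$ with the closed half-spaces $\{\|u\|\le r\}$ and $\{\|u\|\ge r\}$, are exactly the paper's arguments (the paper parametrizes by $A$ rather than $A_0$, which is immaterial). The only difference is that the paper dispatches the criticality and multiplicity conclusions with a one-line appeal to ``standard results in critical point theory,'' whereas you write out the equivariant deformation and the pseudo-index subadditivity estimate explicitly; your sketch of these is sound.
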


\begin{proof}
If $M \in \A_{k+1}^\ast$,
\[
i(S_r \setminus B) = i(S_1 \setminus B_0) \le k < k + 1 \le i^\ast(M) \le i(M \cap S_r)
\]
since $\id{W} \in \Gamma$. Hence $M$ intersects $B$ by \ref{i2} of Proposition \ref{Proposition 2.1}. It follows that $c_{k+1}^\ast \ge \inf \Phi(B)$.
If $\gamma \in \Gamma$, consider the continuous map
\[
\varphi : A \times [0,1] \to W, \quad \varphi(u,t) = \gamma(tu).
\]
We have $\varphi(A \times [0,1]) = \gamma(X)$, which is compact. Since $\gamma$ is odd, $\varphi(-u,t) = - \varphi(u,t)$ for all $(u,t) \in A \times [0,1]$ and $\varphi(A \times \set{0}) = \set{\gamma(0)} = \set{0}$. Since $\Phi \le 0$ on $A$, $\restr{\gamma}{A} = \id{A}$ and hence $\varphi(A \times \set{1}) = A$. Applying \ref{i7} with $\widetilde{A}_0 = \set{u \in W : \norm{u} \le r}$ and $\widetilde{A}_1 = \set{u \in W : \norm{u} \ge r}$ gives
\[
i(\gamma(X) \cap S_r) = i(\varphi(A \times [0,1]) \cap \widetilde{A}_0 \cap \widetilde{A}_1) \ge i(A) = i(A_0) \ge k + m.
\]
It follows that $i^\ast(X) \ge k + m$. So $X \in \A_{k+m}^\ast$ and hence $c_{k+m}^\ast \le \sup \Phi(X)$.
The rest now follows from standard results in critical point theory, see e.g.\ \cite{MR2640827}.
\end{proof}

\begin{remark}
Constructions similar to the one in the proof of Theorem \ref{Theorem 2.1} have been used in Fadell and Rabinowitz \cite{MR57:17677} to prove bifurcation results for Hamiltonian systems, and in Perera and Szulkin \cite{MR2153141} to obtain nontrivial solutions of $p$-Laplacian problems with nonlinearities that interact with the spectrum. See also \cite[Proposition 3.44]{MR2640827}.
\end{remark}

\section{Proof of Theorem \ref{Theorem 1.1}}

In this section we prove Theorem \ref{Theorem 1.1}. We only give the proof of \ref{Theorem 1.1.ii}. Proof of \ref{Theorem 1.1.i} is similar and simpler. Solutions of problem \eqref{1.2} coincide with critical points of the $C^1$-functional
\[
\Phi(u) = \int_\Omega \left[\frac{1}{p}\, |\nabla u|^p - \frac{\lambda}{p}\, |u|^p - \frac{1}{p^\ast}\, |u|^{p^\ast}\right] dx, \quad u \in W^{1,p}_0(\Omega).
\]
By Guedda and V{\'e}ron \cite[Theorem 3.4]{MR1009077}, $\Phi$ satisfies the \PS{c} condition for all $c < S^{N/p}/N$, so we apply Theorem \ref{Theorem 2.1} with $b = S^{N/p}/N$. By Degiovanni and Lancelotti \cite[Theorem 2.3]{MR2514055}, the sublevel set $\Psi^{\lambda_{k+m}}$ has a compact symmetric subset $A_0$ with
\[
i(A_0) = k + m.
\]
We take $B_0 = \Psi_{\lambda_{k+1}}$, so that
\[
i(S_1 \setminus B_0) = k
\]
by \eqref{1.7}. Let $R > r > 0$ and let $A$, $B$ and $X$ be as in Theorem \ref{Theorem 2.1}.
For $u \in \Psi_{\lambda_{k+1}}$,
\[
\Phi(ru) \ge \frac{r^p}{p} \left(1 - \frac{\lambda}{\lambda_{k+1}}\right) - \frac{r^{p^\ast}}{p^\ast\, S^{p^\ast/p}}
\]
by \eqref{1.11}. Since $\lambda < \lambda_{k+1}$ and $p^\ast > p$, it follows that $\inf \Phi(B) > 0$ if $r$ is sufficiently small. For $u \in A_0 \subset \Psi^{\lambda_{k+1}}$,
\[
\Phi(Ru) \le \frac{R^p}{p} \left(1 - \frac{\lambda}{\lambda_{k+1}}\right) - \frac{R^{p^\ast}}{p^\ast \vol{\Omega}^{p^\ast/N} \lambda_{k+1}^{p^\ast/p}}
\]
by the H\"older inequality, so there exists $R > r$ such that $\Phi \le 0$ on $A$. For $u \in X$,
\begin{align*}
\Phi(u) & \le  \frac{\lambda_{k+1} - \lambda}{p} \int_\Omega |u|^p\, dx - \frac{1}{p^\ast \vol{\Omega}^{p^\ast/N}} \left(\int_\Omega |u|^p\, dx\right)^{p^\ast/p}\\[10pt]
& \le  \sup_{\rho \ge 0}\, \left[\frac{(\lambda_{k+1} - \lambda)\, \rho}{p} - \frac{\rho^{p^\ast/p}}{p^\ast \vol{\Omega}^{p^\ast/N}}\right]\\[10pt]
& =  \frac{\vol{\Omega}}{N}\, (\lambda_{k+1} - \lambda)^{N/p}.
\end{align*}
So
\[
\sup \Phi(X) \le \frac{\vol{\Omega}}{N}\, (\lambda_{k+1} - \lambda)^{N/p} < \frac{S^{N/p}}{N}
\]
by \eqref{1.10}.
Theorem \ref{Theorem 2.1} now gives $m$ distinct pairs of (nontrivial) critical points $\pm\, u^\lambda_j,\, j = 1,\dots,m$ of $\Phi$ such that
\begin{equation} \label{4.4}
0 < \Phi(u^\lambda_j) \le \frac{\vol{\Omega}}{N}\, (\lambda_{k+1} - \lambda)^{N/p} \to 0 \text{ as } \lambda \nearrow \lambda_{k+1}.
\end{equation}
Then
\[
\|u^\lambda_j\|_{p^\ast}^{p^\ast} = N \big(\Phi(u^\lambda_j) - \frac{1}{p}\, \Phi'(u^\lambda_j)\, u^\lambda_j\big) = N \Phi(u^\lambda_j) \to 0
\]
and hence $u^\lambda_j \to 0$ in $L^p(\Omega)$ also by the H\"older inequality, so
\[
\big\|\nabla u^\lambda_j\big\|^p_p = p \Phi(u^\lambda_j) + \lambda\, \|u^\lambda_j\|_p^p + \frac{p}{p^\ast}\, \|u^\lambda_j\|_{p^\ast}^{p^\ast} \to 0.
\]
This completes the proof of Theorem \ref{Theorem 1.1}.

\end{document}